\documentclass[12pt]{amsart}
\usepackage{amssymb,latexsym, amssymb, amsthm,amsmath}
\usepackage{enumerate}
\usepackage{amsfonts}
\usepackage{color}
\usepackage{comment}

\usepackage{hyperref}

\newtheorem{lemma}{Lemma}
\newtheorem{theorem}[lemma]{Theorem}

\numberwithin{equation}{section} \numberwithin{lemma}{section}

\newtheorem{definition}[lemma]{Definition}
\newtheorem{example}[lemma]{Example}

\newtheorem*{condition}{Conditions for system (2.1)}

\makeatletter
\@namedef{subjclassname@2010}{
  \textup{2010} Mathematics Subject Classification}
\makeatother

\numberwithin{equation}{section}

\textwidth=13.5cm
\textheight=23cm
\parindent=16pt
\topmargin=-0.5cm

\newcommand{\N}{\mathbb{N}}
\newcommand{\R}{\mathbb{R}}

\newcommand{\Z}{\mathbb{Z}}
\newcommand{\Q}{\mathbb{Q}}

\newcommand{\T}{\mathbb{T}}

\newcommand{\vektor}[1]{\mathbf{#1}}
\newcommand{\av}{\vektor{a}}
\newcommand{\bv}{\vektor{b}}
\newcommand{\ev}{\vektor{e}}

\newcommand{\nv}{\vektor{n}}

\newcommand{\mv}{\vektor{m}}
\newcommand{\xv}{\vektor{x}}
\newcommand{\yv}{\vektor{y}}
\newcommand{\jv}{\vektor{j}}
\newcommand{\hv}{\vektor{h}}

\newcommand{\vv}{\vektor{v}}
\newcommand{\wv}{\vektor{w}}

\newcommand{\nullv}{\boldsymbol{0}}
\newcommand{\einsv}{\boldsymbol{1}}

\newcommand{\ma}{{\mathfrak m}}
\newcommand{\Ma}{{\mathfrak M}}

\newenvironment{blau}{\color{blue}}{}
\newenvironment{gruen}{\color{green}}{}
\newenvironment{rot}{\color{red}}{}
\specialcomment{forme}{\begin{blau}}{\end{blau}}
\excludecomment{forme}
\specialcomment{exam}{\begin{gruen}}{\end{gruen}}
\excludecomment{exam}
\specialcomment{expl}{\begin{rot}}{\end{rot}}
\excludecomment{expl}



\setcounter{tocdepth}{1}

\begin{document}

\title[Translation invariant quadratic forms]{Some refinements for translation invariant \\ quadratic forms in dense sets}

\author[Eugen Keil]{Eugen Keil}

\address{Mathematical Institute \\
University of Oxford \\
Andrew Wiles Building \\
Radcliffe Observatory Quarter \\
Woodstock Road \\
Oxford \\
OX2 6GG \\
United Kingdom}

\email{Eugen.Keil@maths.ox.ac.uk}

\date{\today}

\subjclass[2010]{Primary 11B30; Secondary 11P55, 11D09} 
\keywords{dense set \and quadratic form \and translation invariant}

\maketitle

\begin{abstract}
We improve the result of our previous paper on translation invariant quadratic forms 
in two special cases. We reduce the density bound 
$|\mathcal{A}|/N = O((\log\log N)^{-c})$ to $|\mathcal{A}|/N = O((\log N)^{-c})$
for most quadratic forms and handle
almost diagonal equations in $s \geq 6$ variables instead of $s \geq 8$.
\end{abstract}

\section{Introduction}

This paper complements the author's work \cite{Keil2} on 
translation invariant quadratic forms and should be read subsequent to it.
In \cite{Keil2} we have proven the following result.

\begin{theorem} \label{Thm1}
Let $Q \in \Z^{s \times s}$ be symmetric with $Q \cdot \einsv = \nullv$ and off-rank~$r$.
Assume that $\xv^TQ\xv = 0$ has a non-singular real solution and assume that 
$s \geq 5 + 3r$ for $1 \leq r \leq 4$ and $s \geq 10$ for $r \geq 5$. 
If there are only trivial solutions, when the variables
are restricted to $\mathcal{A} \subset \{1,2,\ldots,N\}$, 
then $|\mathcal{A}| \leq C_Q N (\log\log N)^{-c}$
for some $c, C_Q > 0$ and $c$ independent of $Q$.
\end{theorem}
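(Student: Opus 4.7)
The plan is to count non-trivial solutions of $\xv^T Q \xv = 0$ in $\mathcal{A}^s$ via the Hardy--Littlewood circle method, coupled with a density increment strategy. Setting $\alpha = |\mathcal{A}|/N$ and writing the solution count as
\[
T(\mathcal{A}) = \int_0^1 F(\theta)\, d\theta, \qquad F(\theta) := \sum_{\xv \in \mathcal{A}^s} \e{\theta \, \xv^T Q \xv},
\]
I would decompose $[0,1]$ into major arcs $\Ma$ (around rationals of small denominator) and minor arcs $\ma$. The hypothesis that $\xv^T Q \xv = 0$ has a non-singular real zero produces a positive singular integral, while the translation invariance $Q\einsv = \nullv$ yields local solubility modulo every $q$ via the constant vectors $c\einsv$, producing a positive singular series $\Sing$. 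A standard major arc analysis then gives a main term of order $\Sing \cdot \alpha^s N^{s-1}$, which dwarfs the $\alpha N$ trivial (constant) solutions whenever $\alpha$ exceeds any fixed negative power of $N$.

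The core task is to bound the minor arc contribution or extract a density increment from it. For this I would apply Weyl differencing: by Cauchy--Schwarz and a shift $\xv \mapsto \xv + \hv$, one replaces $|F(\theta)|^2$ by an average over $\hv$ of exponential sums whose phase is the linear form $2\theta\,(Q\hv)^T \xv$ in $\xv$. These linearised sums can be estimated either pointwise (giving $L^\infty$ cancellation away from rationals $a/q$ of small denominator) or via $\ell^2$ arguments exploiting the range of $Q\hv$. The condition $s \geq 5 + 3r$ in the range $1 \leq r \leq 4$ is precisely calibrated so that after isolating the $r$ variables controlled by the off-diagonal block of $Q$ and carrying out the corresponding differencing, at least five essentially free diagonal variables remain, which is the threshold at which the residual translation-invariant equation admits a Roth-type analysis. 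The case $r \geq 5$ is handled by an alternative route that exploits the larger off-diagonal block and needs only $s \geq 10$.

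If the minor arc integral is not negligible, then at some $\theta \in \ma$ the sum $|F(\theta)|$ is abnormally large; Dirichlet approximation produces a rational $a/q$ of small denominator near $\theta$, and passing through the Weyl differencing inequality yields a Fourier coefficient of $\mathcal{A}$ of size $\gg \alpha^{O(1)}$ along a long arithmetic progression $P \subset \{1,\ldots,N\}$. From this I conclude that $\mathcal{A}$ has density at least $\alpha(1+\eta)$ on some subprogression $P'$ of common difference $q$; after rescaling $P'$ to $\{1,\ldots,N'\}$ with $N' \geq N^{c_0}$, the new set is again non-trivial solution free, and the entire argument iterates until the density exceeds an absolute constant.

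The main obstacle is the minor arc estimate: controlling the interaction between the diagonal and off-diagonal parts of $Q$ during Weyl differencing without exhausting the available variables. This is where the peculiar condition on $s$ in terms of $r$ enters and where the bulk of the technical labour resides, in particular in distinguishing the regimes $1 \leq r \leq 4$ and $r \geq 5$. A secondary but unavoidable loss arises from the density increment step itself: each iteration shrinks $N$ to roughly $N^{c_0}$ while improving $\alpha$ by only a multiplicative factor $(1+\eta)$, so after $O(\log\log N)$ iterations the available range collapses, and this is precisely what produces the final density bound $|\mathcal{A}| \ll N(\log\log N)^{-c}$.
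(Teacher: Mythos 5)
There is a genuine gap, and it sits at both ends of your dichotomy. First, the claimed main term is not available in the form you state: for an arbitrary dense set $\mathcal{A}$ the major arcs of $F(\theta)=\sum_{\xv\in\mathcal{A}^s}e(\theta\,\xv^TQ\xv)$ need not produce anything like $\Sing\cdot\alpha^s N^{s-1}$ (the expected order for one quadratic equation is in any case $N^{s-2}$, not $N^{s-1}$), because $\mathcal{A}$ carries no a priori equidistribution in residue classes or short intervals. The actual argument gets its main term by writing $1_{\mathcal{A}}=\delta 1_{[1,N]}+f$ with the balanced function $f$, so that the all-$\delta$ term is a classical count over $[1,N]^s$ of size $\gg\delta^sN^{s-2}$, and the hypothesis of only trivial solutions forces some term involving $f$ to satisfy $\int_0^1|S_{f_i}(\alpha)|\,d\alpha\gg\delta^sN^{s-2}$; your version of the dichotomy (``major arcs give the main term, so minor arcs must be large'') is simply not valid for a general $\mathcal{A}$. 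Relatedly, your appeal to the constant vectors $c\einsv$ for local solubility is delicate: these are singular zeros of the quadric (their gradient is $2cQ\einsv=\nullv$), so they do not by themselves give a positive singular series; positivity of the local densities has to be argued separately.

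Second, the step where the off-rank $r$ and the thresholds $s\geq 5+3r$, respectively $s\geq 10$, must actually do work is left as a declaration of intent. Generic Weyl differencing of $Q$ does not see the off-rank; the proof rests on a bilinear estimate of the shape $|S_g(\alpha)|\ll N^{s-r}K(\alpha)^r$, with $K(\alpha)$ as in \eqref{eq-K} measuring the rational approximation of $\alpha$, refined via $L^1$ information on $g$ to $|S_g(\alpha)|\ll\delta^{s-10}N^{s-5}K(\alpha)^5$, combined with the moment bound $\int_0^1 K(\alpha)^p\,d\alpha\ll N^{p-2}$ valid only for $p>4$. This is exactly why $r\geq 5$ needs only $s\geq 10$, while for $1\leq r\leq 4$ the exponent of $K$ is too small and a genuinely different argument (a reduction to a system of linear equations, which is where the factor $3$ in $5+3r$ comes from) is required; your heuristic of ``five free diagonal variables remaining'' explains neither the $3r$ nor the split at $r=5$. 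Finally, for the $(\log\log N)^{-c}$ bookkeeping to come out you must track that the increment is $\delta\to\delta+\theta\delta^{C}$ with $C>1$ (so the number of iterations is a power of $\delta^{-1}$, not $O(\log(1/\delta))$), paired with the polynomial length loss per step coming from the fact that a quadratic phase at a generic $\alpha$ is only constant on progressions of length about $N^{1/2}$; as written, a constant-factor gain $(1+\eta)$ per step would yield a different (better) bound and signals that the quantitative structure of the increment has not been pinned down.
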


The condition $Q \cdot \einsv = \nullv$ is equivalent to {\em translation invariance}
of the equation $\xv^TQ\xv = 0$ as explained in \cite{Keil2}. 
The {\em `off-rank'} of a matrix is defined in Section \ref{Sec-setup}.

The aim of this note is to give improvements on this theorem in two natural
special cases. On the one hand, we can improve the bound 
$|\mathcal{A}| \leq C_Q N (\log\log N)^{-c}$ to $|\mathcal{A}| \leq C_Q N (\log N)^{-c}$
in the case $r \geq 5$ (which covers almost all quadratic forms) 
and on the other hand, solve the problem with as few
as $s \geq 6$ variables in the almost diagonal situation $r=1$.
In the end, we want to discuss how the two approaches might be useful to improve on 
Theorem \ref{Thm1} for all quadratic forms.

The next section is devoted to explain the results, which are given
in Theorem \ref{Thm2} and Theorem \ref{R1-thm}.

We introduce some standard notation. 
We use the Vinogradov notation $\ll$ and $O$-notation throughout the paper
and indicate dependencies on parameters by subscripts, like in $O_{P,\epsilon}(N)$.
The asymptotic parameter $N \in \N$ should be thought of as large and we work
most of the time on the interval $[1,N] = \{1,2,\ldots,N\}$. We write
$\T = \R/\Z$ for the `circle' and identify it with $[0,1]$ whenever convenient.
As usual we abbreviate $e(x)=\exp(2\pi i x)$.

For a set $\mathcal{A} \subset [1,N]$ with the indicator function $1_{\mathcal{A}}$ 
and density $\delta =|\mathcal{A}|/N$  we define the `balanced function' by
\begin{align} \label{balanced}
f(n) = 1_{\mathcal{A}}(n) - \delta.
\end{align}

We are using bold face to denote vectors and the notation $\xv \leq N$ means
that each coordinate is bounded by $N$.\\

{\bf Acknowledgements:}\\
This work has a non-empty intersection with a chapter from the author's Ph.D. thesis. 
He would like to thank Trevor Wooley for his encouragement and helpful discussions.
The Ph.D.-studies of the author were partially supported by the EPSRC.
This work was finished at the University of Oxford with the 
support by EPSRC, grant EP/J009458/1.

\section{Statement of Results} \label{Sec-setup}

First we summarize some of the definitions from \cite{Keil2} that are needed for the proofs below.

A quadratic form $Q(\xv) = \xv^TQ\xv$ is translation invariant if $Q(\xv + \einsv) = Q(\xv)$
for all $\xv \in \Z^s$. For a corresponding symmetric matrix $Q \in \Z^{s \times s}$, this translates into the condition
$Q \cdot \einsv = \nullv$.

\begin{definition}[Off-diagonal rank]
For a symmetric matrix $Q \in \R^{s \times s}$ we consider the set of matrices $M$
such that $M = P^TQP$ for a permutation matrix $P$. Such a matrix can be (non-uniquely) written as
\begin{align*}
M = \begin{pmatrix} 
  A  & B\\ 
  B^T & C  
\end{pmatrix}
\end{align*}
for some matrices $A,B$ and $C$. The \emph{off-rank} $r$ of $Q$ is defined as 
\begin{align*}
r = \max\mbox{rank}(B),
\end{align*}
where the maximum is taken over all choices of $P$ and decompositions of $M$.
In other words, $r$ is the maximal rank of a submatrix in $Q$, that doesn't
contain any diagonal elements.
\end{definition}

Now we can state the first theorem.

\begin{theorem} \label{Thm2}
Let $Q \in \Z^{s \times s}$ be symmetric with $Q \cdot \einsv = \nullv$ and off-rank $r \geq 5$
(this implies that $s \geq 10$).
Assume that it has a non-singular real solution to $\xv^TQ\xv=0$, 
but only trivial solutions when the variables
are restricted to $\mathcal{A} \subset [1,N]$. 
Then $|\mathcal{A}| \leq C_Q N (\log N)^{-c}$
for some $c,C_Q > 0$.
\end{theorem}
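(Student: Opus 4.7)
The overall plan is to follow the Fourier-analytic density-increment strategy of \cite{Keil2}, but to reorganise the final step so that it produces a density increment on a genuine arithmetic progression rather than on a Bohr set of growing dimension. This single modification is what would replace the $(\log\log N)^{-c}$ loss by a $(\log N)^{-c}$ one, since an iteration on arithmetic progressions loses only a polynomial factor in $\delta$ in the length at each step, and $O(\log(1/\delta))$ iterations then suffice.

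First I would repeat the circle-method setup of \cite{Keil2}: the weighted count
\[
\mathcal{N}(\mathcal{A}) = \sum_{\substack{\xv \in \mathcal{A}^s\\ Q(\xv) = 0}} 1
\]
is written as an integral over $\T$, and the decomposition $1_{\mathcal{A}} = \delta + f$ via the balanced function \eqref{balanced} splits $\mathcal{N}(\mathcal{A})$ into a main term of size $\delta^s \Sing(Q) N^{s-2}$---which is positive by the non-singular real solution hypothesis combined with translation invariance---and $2^s-1$ multilinear error terms in $f$. The hypothesis of only trivial solutions forces at least one error term to match the main term in magnitude.

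The off-rank hypothesis $r \geq 5$ is then used in the same way as in \cite{Keil2} to dispose of every error term containing five or more copies of $f$: an off-diagonal block of rank $\geq 5$ supports enough independent Weyl differencings to reduce such expressions to $L^2$-type averages that are absorbed by a power of $\delta$. What remains are the error terms with at most four copies of $f$, and the crucial refinement over \cite{Keil2} is in the inversion of these: instead of extracting only an $L^2$-concentration of $\widehat{f}$ on a Bohr set, the argument should be pushed to produce a large single Fourier coefficient, $\|\widehat{f}\|_\infty \gg \delta^{O(1)}$. A standard Dirichlet-approximation argument then converts this into a density increment $\delta \mapsto \delta(1 + c_0)$ on an arithmetic progression of length $\gg \delta^{O(1)} N$, and iterating this $O(\log(1/\delta))$ times forces $\delta \ll (\log N)^{-c}$.

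The main obstacle is exactly this inversion step. Weyl differencing most naturally produces $L^2$ information about $\widehat{f}$, and converting it to an $L^\infty$ statement is what traditionally incurs the Bohr-set dimension loss. The claim that makes the argument go through is that the additional rigidity provided by an off-diagonal block of rank $\geq 5$ is enough to carry out this conversion at a single frequency, uniformly over the quadratic form $Q$, at the cost of only a polynomial factor in $\delta$. Verifying this---and ensuring that the singular series remains bounded below along the density-increment iteration, which requires some care since the underlying progression changes at each step---is, in my view, the technical heart of the argument.
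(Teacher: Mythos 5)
There is a genuine gap at exactly the step you yourself flag as the ``technical heart''. Your plan attributes the $(\log\log N)^{-c}$ loss in \cite{Keil2} to a Bohr-set/$L^2$-to-$L^\infty$ inversion and asserts, without any supporting argument, that the rank rigidity from $r\geq 5$ lets one extract a single large \emph{linear} Fourier coefficient $\|\widehat f\|_\infty\gg\delta^{O(1)}$. That misidentifies the source of the loss and leaves the decisive step unproved. The argument of \cite{Keil2} already produces an $L^\infty$-type conclusion, namely a large \emph{quadratic} exponential sum $\sup_\alpha|S_{f_i}(\alpha)|\gg\delta^{O(1)}N^s$; the double logarithm arises because this $\alpha$ may lie anywhere in $\T$, and for a minor-arc $\alpha$ the phase $e(\alpha Q(\xv))$ is only constant on progressions whose length is a small power of $N$, so each increment step collapses $N$ to $N^{c}$ rather than to $\delta^{O(1)}N$. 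A large quadratic sum at an arbitrary frequency cannot in general be converted into a large linear Fourier coefficient, or into a progression of length $\delta^{O(1)}N$, with only polynomial losses in $\delta$ --- so the conversion you postulate is precisely what is missing, and ``rigidity of the off-diagonal block'' is not by itself an argument for it.

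What actually makes the theorem work, and what your proposal omits, is a uniform control of the minor arcs that localizes the large frequency to the major arcs \emph{before} any increment is attempted. The hypothesis $r\geq 5$ enters through the refined pointwise bound \eqref{Sf-est2}, $|S_g(\alpha)|\ll\delta^{s-10}N^{s-5}K(\alpha)^5$, with $K$ as in \eqref{eq-K}: since the exponent $5$ exceeds $4$, one may split off $\sup_{\alpha\in\ma}K(\alpha)^{1/2}\ll D^{-1}\delta^{10}N^{1/2}$ and bound the remaining $\int K^{9/2}\ll N^{5/2}$, so that $\int_{\ma}|S_g|\ll D^{-1}\delta^sN^{s-2}$ \emph{independently of the structure of} $\mathcal A$. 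Only because of this does the lower bound $\int_0^1|S_g|\gg\delta^sN^{s-2}$ force \eqref{eq-malb} to hold with $\alpha\in\Ma$ as in \eqref{eq-Madef}, i.e.\ with $q\leq\delta^{-O(1)}$ and $\|\alpha-a/q\|\leq\delta^{-O(1)}N^{-2}$; on such $\alpha$ the phase $e(\alpha Q(\xv))$ is essentially constant on progressions of common difference $q$ and length $\eta\delta^{240}N$, which is what yields an increment $\delta\to\delta+\theta\delta^{81}$ with only a $\delta^{O(1)}$ loss of length per step and hence $\delta\ll(\log N)^{-c}$. (Incidentally, the increment one actually obtains is of this polynomial strength, not the multiplicative $\delta\mapsto\delta(1+c_0)$ you claim, and the iteration runs for $\delta^{-O(1)}$ rather than $O(\log(1/\delta))$ steps --- still sufficient, but your accounting presumes a stronger conclusion than the method delivers. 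Your worry about singular series along the iteration is, by contrast, routine and handled as in \cite[Section 7]{Keil2} via translation invariance.) Without the minor-arc step your outline does not reach the stated bound.
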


The main idea for the proof can be summarized as follows. The usual density increment
procedure depends on the fact that $e(\alpha p(x))$ for a polynomial $p$ is constant
on long arithmetic progressions. If $\alpha$ is close to a rational number
with small denominator (on the `major arcs'), the length of those
progressions is comparable to $N$ and we get a much better bound. It turns out that
we can control the contribution of the `minor arcs' (complement of the major arcs) 
in the proof of Theorem \ref{Thm2} in a uniform way 
independent of the structure of $\mathcal{A}$. 
This is done in Section \ref{Sec-min}. In Section \ref{Sec-major} we 
perform the density increment with $\alpha$ in the major arcs.\\

To motivate the second part of the paper, 
consider quadratic forms defined by the following matrices.

\begin{example}
Consider the matrices
\begin{align*}
\begin{pmatrix}
3 & 1 & 0 & 0 & 0\\ 
1 & 7 & -2 & -3 & 2\\
0 & -2 & 5 & 0 & 0\\ 
0 & -3 & 0 & 1 & 0\\
0 & 2 & 0 & 0 & -4
\end{pmatrix}
\qquad \mbox{ and } \qquad 
\begin{pmatrix}
0 & 2 & 2 & 2 & 1\\ 
2 & -1 & 4 & 4 & 2\\
2 & 4 & 6 & 4 & 2\\ 
2 & 4 & 4 & -5 & 2\\
1 & 2 & 2 & 2 & 3
\end{pmatrix}.
\end{align*}

The first example is `almost diagonal' in the sense that off-diagonal terms
are all concentrated in the second column/row.
In what sense is the second example almost diagonal?
We can write the second matrix as a sum of a diagonal matrix $D$ and a rank one
perturbation $R$, a matrix of the form $R = \vv \cdot \vv^T$ for the vector $\vv = (1,2,2,2,1)^T$.
While the first matrix is an example of a small `local' perturbation, the second is an
instance of a diagonal matrix with a small `global' perturbation.
\end{example}

One can easily check that the two matrices have off-rank one.
As we will see in Section \ref{Sec-structR1}, 
it turns out that those two cases exhaust the possible ways
that a symmetric matrix can have off-rank one. Armed with this classification, we can show
that Theorem \ref{Thm1} holds for $s \geq 6$ (instead of $s \geq 5+3r=8$) 
if the matrix underlying our quadratic form has off-rank one.
The result is given in the following theorem.

\begin{theorem} \label{R1-thm}
Let $Q \in \Z^{s \times s}$ be symmetric with $Q \cdot \einsv = \nullv$ and off-rank $r=1$.
Assume that $s \geq 6$, and that there is a non-singular real solution to $\xv^TQ\xv=0$. 
If there are only trivial solutions when the variables
are restricted to $\mathcal{A} \subset [1,N]$, then 
$|\mathcal{A}| \ll_Q N (\log\log N)^{-1/15}$.
\end{theorem}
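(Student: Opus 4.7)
The plan is to exploit the classification (to be established in Section \ref{Sec-structR1}) that says every symmetric matrix of off-rank one is, up to permutation of rows and columns, either a \emph{local} perturbation (all off-diagonal entries confined to a single row together with its symmetric column) or a \emph{global} perturbation of the shape $D+\vv\vv^{T}$ with $D$ diagonal. With this dichotomy in hand, I would run the density increment scheme of \cite{Keil2} in each case separately, using the structure to economise on the number of variables needed.

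In the local case, after permutation we may write
\begin{align*}
\xv^{T}Q\xv \;=\; a x_{1}^{2} + 2 x_{1} L(x_{2},\ldots,x_{s}) + \sum_{i=2}^{s} d_{i} x_{i}^{2},
\end{align*}
for some linear form $L$, where translation invariance $Q\einsv=\nullv$ forces compatibility relations between $a$, $L$ and the $d_{i}$. For $s\geq 6$ we have at least five diagonal `free' variables $x_{2},\ldots,x_{s}$, so I would fix $x_{1}$ as a parameter, apply the circle method in the remaining diagonal variables, and average over $x_{1}$. In the global case, $Q=D+\vv\vv^{T}$ and the condition $Q\einsv=\nullv$ gives $d_{i}=-\sigma v_{i}$ with $\sigma=\einsv\cdot\vv$, so the equation becomes
\begin{align*}
(\vv\cdot\xv)^{2} \;=\; \sigma\sum_{i=1}^{s} v_{i} x_{i}^{2}.
\end{align*}
Introducing the auxiliary variable $y=\vv\cdot\xv$ one obtains, on a coset of the lattice $\{\vv\cdot\xv=y\}$, a diagonal equation in $s\geq 6$ square variables with one linear side condition, which again falls within the reach of a circle method treatment.

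With the problem reduced, in each case, to a (quasi-)diagonal equation in at least five genuine squares, I would carry out the density increment exactly as in \cite{Keil2}: expand the solution count by Fourier inversion, bound the minor-arc integral uniformly in $\mathcal{A}$ using standard Weyl and $L^{4}$ mean-value estimates for diagonal forms, and use the major-arc mass to locate a long arithmetic progression (or Bohr set) on which $\mathcal{A}$ has increased density. Iterating the increment yields a bound of the shape $|\mathcal{A}|\ll_{Q} N(\log\log N)^{-c}$, and tracking the exponents (as in the proof of Theorem \ref{Thm1}) should reproduce the constant $c=1/15$.

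The main obstacle is the minor-arc estimate with only $s=6$ variables. In \cite{Keil2} the hypothesis $s\geq 5+3r$ was used precisely to absorb the loss coming from the off-diagonal block of rank $r$ while keeping enough `spare' diagonal variables for a mean-value input such as Hua's inequality. For $r=1$ the structural classification replaces this spare room: since off-rank one means the off-diagonal coupling is either localised to one variable or is a single rank-one dyad, the quadratic form genuinely behaves like a diagonal form in $s-1\geq 5$ variables, and square-root cancellation on the minor arcs is available without the extra three variables. Making this heuristic precise, and checking that the singular integral and singular series arising in each case are positive under the hypothesis of a non-singular real zero, is the technical core of the argument.
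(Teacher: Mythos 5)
Your overall skeleton coincides with the paper's: the structure lemma splitting off-rank one forms into a local perturbation and a global perturbation $D+\vv\otimes\vv$, then the parameter method (fixing the distinguished variable and running a circle-method/increment argument in the remaining $s-1\geq 5$ shifted diagonal variables) in the local case. Two technical caveats there: translation invariance does more than impose ``compatibility relations'' --- it forces the form to collapse to $\sum_i d_i(x_i-x_s)^2=0$ with no genuine $x_s^2$ term, which is what makes the parameter method work; and plain $L^4$ (Hua-type) mean values for quadratic Weyl sums lose a logarithm, which would ruin a correlation of strength $\delta^{O(1)}N$; the paper instead uses a log-free moment of exponent $9/2>4$ together with half a power of the sup, which is exactly why five shifted variables suffice.

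The genuine gap is in the global case. You reduce to ``a diagonal equation in squares with one linear side condition'' and assert this ``falls within the reach of a circle method treatment,'' but the variables are restricted to a dense set, and for dense-set statements the system itself must be translation invariant --- otherwise no bound of the form $|\mathcal{A}|\ll N(\log\log N)^{-c}$ can be expected. After writing $d_i=-nv_i$, $\sum_i d_i=-n^2$ and introducing $h=\vv\cdot\xv$, the coefficients of the resulting system $\sum_i d_ix_i^2+h^2=0$, $\sum_i d_ix_i+nh=0$ do \emph{not} sum to zero; the paper's key manoeuvre is to restrict $h$ to $n\Z$, write $h=nx_0$, $d_0=n^2$, so that $d_0+d_1+\cdots+d_s=0$ and one lands exactly on the translation invariant system \eqref{eq-sys} in $s+1\geq 7$ dense variables, to which the black-box Theorem \ref{dio-thm} (from \cite{Keil}, after Smith \cite{Smith}) applies --- this is where both the hypothesis $s\geq 6$ and the exponent $1/15$ come from. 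Your sketch neither performs this normalisation nor invokes (or reproves) the dense-variable result for the simultaneous quadratic--linear system, which is a full paper's worth of work and cannot be replaced by ``the density increment exactly as in \cite{Keil2},'' since that reference treats a single equation. You also omit the degenerate case $\sigma=\einsv\cdot\vv=0$, where the form is $(\vv\cdot\xv)^2$ and one must fall back on Roth's theorem \cite{Roth} for the linear equation $\vv\cdot\xv=0$.
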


Let us analyse the two cases in more detail. What does it mean that in the first example 
all off-diagonal terms are concentrated in the row/column of one variable? 
Let us assume for simplicity that the variable is $x_s$. We will see that 
by completing squares, we can remove all off-diagonal terms and 
replace each variable $x_i$ by $x_i-x_s$ (translation invariance!), 
reducing the problem to a diagonal form in five variables, 
which is well within reach of the classical approach. We deal with this case
in Section \ref{sec-param}.

In Section \ref{Sec-diag} we reduce the second case to the following diagonal system 
considered by Smith \cite{Smith} and the author \cite{Keil}.
\begin{equation} \label{eq-sys}
\begin{split} 
d_1 x_1^2 + d_2x_2^2 + \ldots + d_s x_s^2 & = 0, \\
d_1 x_1 + d_2x_2 + \ldots + d_s x_s & = 0.
\end{split}
\end{equation}
The system \eqref{eq-sys} can be handled under the following conditions.

\begin{condition}\ \\
$(i)$ $d_1 + d_2 + \ldots + d_s = 0$,\\
$(ii)$ $s \geq 7$ and $d_i \neq 0$ for all $1 \leq i \leq s$,\\
$(iii)$ there are at least two positive and at least two negative coefficients $d_i$.
\end{condition}

Condition $(i)$ encodes the translation invariance of the system.
The bound $s \geq 7$ in condition $(ii)$ will correspond to the 
condition $s \geq 6$ in Theorem \ref{R1-thm}. 
We assume that $d_i \neq 0$ since we can easily locate
non-trivial solutions otherwise.
Condition $(iii)$ is needed for the existence of a non-singular real solution 
(see \cite{Keil} for details).

In Section \ref{Sec-diag} we use the following theorem from \cite{Keil}
to deduce Theorem \ref{R1-thm} in the second case.

\begin{theorem} \label{dio-thm}
Assume that the conditions above hold and system \eqref{eq-sys} has only trivial solutions 
for $x_i \in \mathcal{A} \subset [1,N]$.
Then $|\mathcal{A}| \leq CN (\log\log N)^{-1/15}$ for some constant $C$, which depends 
only on the coefficients $d_i$ of the system.
\end{theorem}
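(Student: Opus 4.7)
The approach I would take is the Hardy--Littlewood circle method adapted to the two-equation system, combined with a Roth-style density-increment argument. First I would set
$$S(\alpha,\beta) = \sum_{n \in \mathcal{A}} \e{\alpha n^2 + \beta n}, \qquad T(\alpha,\beta) = \sum_{n=1}^{N} \e{\alpha n^2 + \beta n},$$
and express the count of solutions to \eqref{eq-sys} with $x_i \in \mathcal{A}$ as
$$\mathcal{N}(\mathcal{A}) = \int_{\T^2} \prod_{i=1}^{s} S(d_i\alpha, d_i\beta)\, d\alpha\, d\beta.$$
By standard heuristics the expected main term is of order $\delta^s N^{s-3}$, which for $s \geq 7$ dominates the $|\mathcal{A}|$ trivial solutions whenever $\delta$ is not too small. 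The goal is therefore to show that if $\mathcal{N}(\mathcal{A})$ is as small as the trivial count, then $\mathcal{A}$ inherits additional additive structure.

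A telescoping from $1_{\mathcal{A}}$ to the balanced function $f$ rewrites the discrepancy between $\mathcal{N}(\mathcal{A})$ and its expected value as a sum of Fourier integrals in which one of the $s$ factors is replaced by the exponential sum of $f$. To control these I would decompose $\T^2$ into major arcs $\Ma$ (pairs $(\alpha,\beta)$ close to rationals of small denominator) and minor arcs $\ma$. On $\ma$, Weyl-type estimates for quadratic exponential sums give a power saving, and the condition $s \geq 7$ leaves enough spare factors to absorb a H\"older or $L^2$ loss on $f$ and still bound the minor-arc contribution by $\delta^s N^{s-3} \cdot N^{-c}$.

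On the major arcs the exponential sums factorise, up to acceptable error, as a product of Gauss-type sums and smooth Archimedean integrals, and the $f$-factor localises on a two-dimensional Bohr set
$$B = \{ n \leq N : \|n \alpha\| \leq \eta_1,\ \|n \beta\| \leq \eta_2 \}$$
attached to a major arc of denominator at most $Q_0$. Matching sizes then forces either a non-trivial solution to exist or a Bohr-set density increment $\delta \mapsto \delta(1 + c')$ for an absolute $c' > 0$. A standard Bohr-to-progression passage upgrades this to an arithmetic subprogression of length at least $N^{c''}$ on which $\mathcal{A}$ has density at least $\delta(1 + c')$. I expect the main technical obstacle to lie here: controlling the dimension of the Bohr set and the resulting loss in the progression length uniformly in the coefficients $d_i$, since a poorly controlled Bohr-set geometry would either dilute the density increment or shorten the progression so much that the iteration collapses before yielding a useful bound.

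Iterating this density increment, each step multiplies $\delta$ by $(1 + c')$ while replacing $N$ by roughly $N^{c''}$. The density can grow in this way at most $O(\log(1/\delta))$ times before it saturates at $1$, and the nested logarithms generated by the repeated polynomial shrinking of $N$ force $\delta \ll (\log\log N)^{-\eta}$ for some $\eta > 0$. Careful balancing of the minor-arc power saving, the Bohr-set dimension, and the progression length produced at each step should then yield the explicit exponent $1/15$ appearing in Theorem~\ref{dio-thm}.
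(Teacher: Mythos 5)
First, a point of context: the paper does not prove Theorem~\ref{dio-thm} at all --- it is imported verbatim from \cite{Keil} (and the related work of Smith \cite{Smith}), and Section~\ref{Sec-diag} only reduces case $(ii)$ of Lemma~\ref{Lem-Rank1} to it. So your proposal has to be judged as a reconstruction of the argument in \cite{Keil}, and while its global shape (circle method over $\T^2$, balanced-function telescoping, density increment, double-log bound) is of the right type, it contains a genuine gap at the minor-arc step. You claim that on $\ma$ ``Weyl-type estimates for quadratic exponential sums give a power saving'' and that the minor-arc contribution is $\ll \delta^s N^{s-3}\cdot N^{-c}$ uniformly. But the factors in your integrand are sums over the arbitrary set $\mathcal{A}$, or twisted by the balanced function $f$, and Weyl/van der Corput estimates simply do not apply to such sums: for an adversarially structured $\mathcal{A}$ the $f$-twisted sum can be of size $\gg \delta^{O(1)}N$ at a minor-arc point, so no structure-independent power saving on $\ma$ is possible. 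The only tools available for these factors are mean-value (moment/restriction) estimates, and the correct conclusion of that step is weaker: one gets $\sup_{(\alpha,\beta)\in\T^2}\big|\sum_n f(n)e(\alpha n^2+\beta n)\big| \gg \delta^{O(1)}N$ at an \emph{arbitrary} point of $\T^2$, with no license to assume $(\alpha,\beta)$ lies on a major arc. This is precisely why the density increment for the diagonal system only produces subprogressions of length a small power of $N$ (roughly $\sqrt{N}$ after Dirichlet approximation with denominators $\delta^{-O(1)}$), and hence a bound of the shape $(\log\log N)^{-c}$. If your uniform minor-arc saving were actually available, the same argument would give $(\log N)^{-c}$ for the diagonal system --- stronger than Theorem~\ref{dio-thm} and than anything known here; notice that the present paper obtains such a $(\log N)^{-c}$ bound only for off-rank $r\geq 5$ (Sections~\ref{Sec-min}--\ref{Sec-major}), and only because the bilinear bound $|S_g(\alpha)|\ll \delta^{s-10}N^{s-5}K(\alpha)^5$ holds uniformly in $|g|\leq 1$; a diagonal form has no off-diagonal structure to produce any analogue of this.

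Two secondary problems feed into the same issue of the quantitative claim. A correlation of size $\delta^{O(1)}N$ with a quadratic phase yields a relative density increment of size $\delta \mapsto \delta(1+c\,\delta^{C})$, not the absolute-constant gain $\delta(1+c')$ you assert, so the iteration runs for $\delta^{-O(1)}$ steps rather than $O(\log(1/\delta))$; combined with the length loss $N \mapsto N^{\theta}$ per step this still gives a double logarithm, but the bookkeeping is exactly where the exponent $1/15$ comes from, and your sketch defers it entirely (``careful balancing \ldots should then yield $1/15$''), together with the Bohr-set dimension/length control that you yourself flag as the main obstacle. As it stands, the proposal establishes neither the restriction to major arcs nor the stated exponent, and the first of these is not a missing detail but a step that fails as formulated.
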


\section{Controlling the minor arcs} \label{Sec-min}

As in \cite{Keil2}, the main ingredient in the proof is a 
bilinear sum estimate for our exponential sum 
\begin{align*}
S_g(\alpha) = \sum_{\xv \leq N} g(\xv) e(\alpha Q(\xv)).
\end{align*}
Write $\alpha = a/q + \beta$ for a diophantine approximation
with $q \leq N, |\beta| \leq (qN)^{-1}$ and
\begin{align} \label{eq-K}
K(\alpha) = \Big(N \log q + \min\Big\{ \frac{N^2}{q}, \frac{|\log(|\beta| N^2)| + 1}{|\beta|q} \Big\} \Big)^{1/2}.
\end{align}
Then we can bound $S_g(\alpha)$ as in \cite{Keil2} for $|g| \leq 1$ by 
\begin{align*}
|S_g(\alpha)| \ll N^{s-r} K(\alpha)^r.
\end{align*}
As in \cite[Theorem 3]{Keil2}, we can refine the estimate if we are given 
$L^1$-bounds $\sum_{n \leq N} |g(n)| = O(\delta N)$.
As in \cite{Keil2}, we get the bound
\begin{align} \label{Sf-est2}
|S_g(\alpha)| \ll \delta^{s-10} N^{s-5} K(\alpha)^5.
\end{align}

This pointwise bound allows us to deduce a sharp $L^p$-estimate for $S_g(\alpha)$,
which is necessary for the density increment strategy to work.

If we look at the proof of this $L^p$ bound in \cite{Keil2}, 
we see that the condition we need
for the exponent of $K(\alpha)$ is that it is bigger than four.
This means that we can pull out a small power of $K(\alpha)$,
and improve the estimate on the minor arcs, where $K(\alpha)$ is small.

For an absolute constant $D > 1$ to be chosen later, define the major arcs for 
$q \leq D^4\delta^{-40}$ and $(a;q)=1$ by
\begin{align} \label{eq-Madef}
\Ma(q,a) = \{\alpha \in \T: \|\alpha - a/q\| \leq D^{8}\delta^{-80}N^{-2}\}
\end{align}
(disjoint for $N \geq D^8\delta^{-80}$) and set $\Ma$ to be the union of all those sets.
The minor arcs $\ma = \T\backslash \Ma$ is the complement.
The key point to notice is that the constants in the definition of $\Ma$ 
only depend on $\delta$ and an absolute constant $D$. The precise
numbers are less important.

By Dirichlet's approximation theorem, we have that every $\alpha \in \T$ is contained
in at least one ball defined by $\|\alpha - a/q\| \leq (qN)^{-1}$ for some $q \leq N$.
This means that we have two types of minor arcs. Those Dirichlet neighbourhoods
with $q > D^4\delta^{-40}$ and those with $q \leq D^4\delta^{-40}$ but 
$D^{8}\delta^{-80}N^{-2} < \|\alpha - a/q\| \leq (qN)^{-1}$.

If $q > D^4\delta^{-40}$, we have 
\begin{align*}
K(\alpha) \ll (N \log q)^{1/2} + \frac{N}{q^{1/2}} \ll D^{-2}\delta^{20} N.
\end{align*}
for $N \gg D^8\delta^{-80}$.

In the second case $q \leq D^4\delta^{-40}$, we split the two intervals 
$D^{8}(N^2\delta^{80})^{-1} < |\beta| \leq (qN)^{-1}$ into dyadic bits
$|\beta| \in (2^iN^{-2},2^{i+1}N^{-2}]$ for 
$\log_2(\delta^{-80}D^{8}) \leq i \leq \log_2(Nq^{-1})$.
For fixed $i$ we have
\begin{align*}
K(\alpha) & \ll (N \log q)^{1/2} + \Big(N^2\frac{i + 1}{q2^i}\Big)^{1/2}\\
& \ll (N \log q)^{1/2} + N2^{-i/4} \ll D^{-2}\delta^{20}N
\end{align*}
for $N \gg D^8\delta^{-80}$.

Now that we have good pointwise bounds, we cite \cite[Lemma 2]{Keil2}, which 
gives us a control of $K(\alpha)$ on average. For $p > 4$ we have
\begin{align*}
\int_0^1 |K(\alpha)|^p\,d\alpha \ll N^{p-2}.
\end{align*}
We apply the two bounds with \eqref{Sf-est2} and get
\begin{align*}
\int_{\ma} |S_g(\alpha)|\,d\alpha 
\ll & (\delta^{s-10} N^{s-5}) \sup_{\alpha \in \ma} |K(\alpha)|^{1/2} 
\int_{\ma} |K(\alpha)|^{9/2}\,d\alpha\\
\ll & (\delta^{s-10} N^{s-5}) (D^{-1}\delta^{10} N^{1/2}) N^{5/2}  
\ll D^{-1}\delta^s N^{s-2}.
\end{align*}

We can use this bound in the argument in \cite[Section 3]{Keil2}.
We know that 
\begin{align*}
\int_0^1 |S_{g}(\alpha)| \, d\alpha \gg \delta^s N^{s-2}
\end{align*}
for $g(\xv) = 1_{\mathcal{A}^s}(\xv) - \delta^s$ with $\delta = |\mathcal{A}|/N$
as long as $N \gg_Q \delta^{-2}$. By choosing $D$ sufficiently small, this implies that 
\begin{align*}
\int_{\Ma} |S_{g}(\alpha)| \, d\alpha \gg \delta^s N^{s-2}.
\end{align*}
By the same argument as in \cite[Section 3]{Keil2} we get the bound
\begin{align} \label{eq-malb}
\sup_{\alpha \in \Ma} |S_{f_i}(\alpha)| \gg \delta^{s+80} N^s
\end{align}
for a function $f_i$ that is a (tensor) product of functions $f, \delta$ and $1_{\mathcal{A}}$
(see \eqref{balanced}).

\section{Density increment on major arcs} \label{Sec-major}

Consider the density increment argument from \cite[Section 7]{Keil2}.
Instead of a general lower bound for the exponential sum, we have \eqref{eq-malb},
where $\alpha$ lies in the major arcs $\Ma$ defined in \eqref{eq-Madef}.
This implies that there is a $q \leq D^4\delta^{-40}$ and $1 \leq a \leq q$ 
such that $\|\alpha-a/q\| \leq D^{8}\delta^{-80}N^{-2}$.
For the following argument, $D$ is a fixed constant and will be absorbed
in the Vinogradov notation.\\

We decompose our interval $[1,N]$ into progressions modulo $q$.
On each progression we have $e(\alpha Q(\xv))=e(\beta Q(\xv))e(aQ(\xv)/q)$, where 
$\alpha= a/q + \beta$.
Since $\beta$ is very small, we can estimate
\begin{align*}
|e(\beta Q(\xv))-e(\beta Q(\yv))| \leq 2\pi |\beta| |Q(\xv)- Q(\yv)|
\ll (N^2\delta^{80})^{-1}  |Q(\xv)- Q(\yv)|.
\end{align*}
The second factor is constant on progressions with difference $q$.
If we write $\xv = q\mv + \hv$ and $\yv = q\nv + \hv$, we obtain
\begin{align*}
& |e(\alpha Q(\xv))-e(\alpha Q(\yv))|= |e(\beta Q(\xv))e(aQ(\hv)/q)-e(\beta Q(\yv))e(aQ(\hv)/q)|\\
= & |e(\beta Q(\xv))-e(\beta Q(\yv))| \ll (N^2\delta^{80})^{-1} |Q(\xv)-Q(\yv)|\\
\ll & (N^2\delta^{80})^{-1}  q^2N \|\mv-\nv\|_{\infty}
\ll N^{-1}\delta^{-160}\|\mv-\nv\|_{\infty}.
\end{align*}

If we cut our $q$-progressions $P_{\hv}$ into subprogressions 
$P_{\hv,\jv}$ of side length approximately
$P=\eta\delta^{240}N$ for some small fixed $\eta$,
our function $e(\alpha Q(\xv))$ is constant up to 
a small error term of size $O(\delta^{80})$. 
This implies
\begin{align*}
& \sum_{\xv \leq N} f_i(\xv) e(\alpha Q(\xv)) = 
\sum_{\hv \leq q} \sum_{\jv \leq (q\eta\delta^{240})^{-1}} 
\sum_{\xv \in P_{\hv,\jv}} f_i(\xv) e(\alpha Q(\xv)) \\
= & \sum_{\hv \leq q} \sum_{\jv \leq (q\eta\delta^{240})^{-1}} 
\sum_{\xv \in P_{\hv,\jv}} f_i(\xv) (c(\alpha,\hv,\jv) + \epsilon(\alpha,\xv)\delta^{80}) 
\end{align*}
for some functions $|c(\alpha,\hv,\jv)| \leq 1$ and $|\epsilon(\alpha,\xv)| < 1/2$. 
Taking absolute values and using \ref{eq-malb} with $\sum_{\xv}|f_i(\xv)| \ll \delta^s N^s$ gives
\begin{align*}
\sum_{\hv \leq q} \sum_{\jv \leq (q\eta\delta^{240})^{-1}} 
\Big|\sum_{\xv \in P_{\hv,\jv}} f_i(\xv) \Big| \gg \delta^{s+80} N^s.
\end{align*}
For at least one value of $\hv$ and $\jv$, we get
\begin{align*}
\Big|\sum_{\xv \in P_{\hv,\jv}} f_i(\xv) \Big| \gg \delta^{240s+s+80} N^s.
\end{align*}

Since $f_i$ is a product of $f, 1_{\mathcal{A}}$ and $\delta$ the 
sum on the left hand side splits into $s$ independent sums.
We estimate all factors trivially apart from one of those that contain the balanced
function $f$ (which exists by construction of $f_i$).
Write $P_{\hv,\jv} = \hv + qP_{\jv}$ with $P_{\jv} = P_{j_1} \times \ldots \times P_{j_s}$,
then we have for some $k \leq s$ the inequality
\begin{align*}
(\delta P)^{s-1} \Big|\sum_{x_k \in h_k + qP_{j_k}} f(x_k) \Big| \gg \delta^{240s+s+80} N^s
\gg \delta^{s+80} |P_{j_k}|^{s}.
\end{align*}

This implies a density increment of size $\delta \to \delta + \theta\delta^{81}$
for a small $\theta > 0$ 
and a loss of progression length of $N \to \eta\delta^{240}N$.
Now we perform a density increment argument, the details of which can be found
in \cite[Section 7]{Keil2}.

The number of steps of this iteration is bounded by $\theta^{-1}\delta^{-81}$ and 
we end up with the condition $(\eta\delta^{240})^{\theta^{-1}\delta^{-81}}N \ll C_Q D^8\delta^{-80}$
for some constant $C_Q$, which depends on the smallest non-trivial solution of $Q$ in $\Z$
(for a discussion of this, see \cite[Section 2]{Keil2}).
Taking logarithms and rearranging for $\delta$ gives us
$\delta \ll (\log N)^{-1/82}$, for example.

\section{Structure of Quadratic Forms with Off-rank One}  \label{Sec-structR1}

Now we turn our attention towards the second topic of this paper, the 
off-rank one situation. Given a symmetric matrix with off-rank one, 
what can we say about its structure?
The following lemma provides a complete answer to this question.
Write $\vv \otimes \wv := \vv \cdot \wv^T$ for $\vv,\wv \in \R^d$.

\begin{lemma} \label{Lem-Rank1}
Let $Q \in \Z^{s \times s}$ be a symmetric matrix with off-rank $r=1$. Then either
\begin{itemize}
\item[$(i)$] $Q = D + \vv \otimes \ev_j + \ev_j \otimes \vv$, where $\vv \in \Z^s$, 
$\ev_j$ is the standard basis vector for some $1 \leq j \leq s$ and $D \in \Z^{s\times s}$ is diagonal, or
\item[$(ii)$] $Q = m^{-1}(D + \vv \otimes \vv)$ with a diagonal quadric $D \in \Z^{s \times s}$, some vector $\vv \in \Z^s$
and $m \in \Z \backslash\{0\}$.
\end{itemize}
\end{lemma}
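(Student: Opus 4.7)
The plan is to analyse the off-diagonal part $O := Q - D_0$ of $Q$, where $D_0$ is the diagonal of $Q$. The off-rank one hypothesis says that every bipartite block $O_{I,J}$ with $I \cap J = \emptyset$ has rank at most one; applied to all three $2 \times 2$ submatrices on four distinct indices $i,j,k,l$ and combined with the symmetry $O_{ab}=O_{ba}$, this yields the fundamental identity
\begin{equation*}
O_{ij} O_{kl} = O_{ik} O_{jl} = O_{il} O_{jk}.
\end{equation*}

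The key structural step is a graph-theoretic dichotomy. Associate to $O$ the graph $G$ on $\{1,\ldots,s\}$ with edges $\{i,j\}$ precisely when $O_{ij} \neq 0$, and let $V'$ denote the set of non-isolated vertices. I claim that $G$ is either a star (all edges sharing a common vertex) or $G|_{V'}$ is complete. Assume for contradiction that both fail, so there exist $a,b \in V'$ with $O_{ab} = 0$ and $G$ is not a star. Let $c$ be a neighbour of $a$ and $d$ a neighbour of $b$. Rank one on rows $\{a,d\}$ and columns $\{b,c\}$ gives $O_{ab} O_{dc} = O_{ac} O_{db}$; with $O_{ab}=0$ and $O_{ac},O_{db} \neq 0$, this forces $c=d$ (otherwise $a,b,c,d$ are four distinct indices giving a contradiction directly). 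A second application, to $\{a,c\} \times \{b,e\}$ for any $e \neq a,b,c$, yields $O_{ae}=0$ (and symmetrically $O_{be}=0$), so $a$ and $b$ each have $c$ as their unique neighbour. Any remaining hypothetical edge $\{e,f\}$ with $e,f \neq c$ is then ruled out by the rank-one constraint on $\{a,e\} \times \{c,f\}$, which gives $O_{ac} O_{ef}=0$. Hence $G$ is a star centred at $c$, a contradiction.

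The two cases of the dichotomy correspond directly to the two representations. In the star case, if $G$ is a star centred at $j$, set $v_i := Q_{ij}$ for $i \neq j$ and $v_j := 0$; then $D := Q - (\vv \otimes \ev_j + \ev_j \otimes \vv)$ is a diagonal integer matrix, giving representation $(i)$. In the complete case (with $|V'| \geq 3$; the smaller cases fit into $(i)$), fix $i_0 \in V'$ and, using the fundamental identity, show that
\begin{equation*}
R := \frac{O_{i_0 i} O_{i_0 j}}{O_{ij}}
\end{equation*}
is independent of the choice of distinct $i, j \in V' \setminus \{i_0\}$. Setting $\tilde v_i := O_{i_0 i}$ for $i \in V' \setminus \{i_0\}$, $\tilde v_{i_0} := R$, $\tilde v_i := 0$ otherwise, and $\tilde m := R$, one verifies $\tilde v_i \tilde v_j / \tilde m = O_{ij}$ for all distinct $i,j$. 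Choosing an integer $c$ that clears the denominator of $R$ and scaling $\vv := c\tilde\vv$, $m := c^2 \tilde m$ produces integer data, and $D := mQ - \vv \otimes \vv$ is then diagonal and integer valued, yielding representation $(ii)$.

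The main obstacle is the dichotomy itself: a priori, $G$ could take many intermediate shapes (a $4$-cycle, a triangle with an extra pendant edge, a path, etc.), and one has to pick the right $2 \times 2$ blocks to rule out each of them, while keeping track of when four distinct indices are actually available in low dimensions. Once the dichotomy is in place, the rest is essentially forced by the factorisation $O_{ij} = v_i v_j / m$, and integrality is recovered by clearing a single common denominator.
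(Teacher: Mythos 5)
Your proof is correct and follows essentially the same route as the paper: both arguments rest on the vanishing of the $2\times 2$ off-diagonal minors (the relation $Q_{ij}Q_{kl}=Q_{ik}Q_{jl}=Q_{il}Q_{jk}$) and split into the case where the off-diagonal support is confined to a single row/column, giving $(i)$, and the case where the off-diagonal part completes to a rank-one matrix, giving $(ii)$. Your star/complete dichotomy is just a globalized form of the paper's case split at a pivot entry $u=Q_{12}$ (whether one of the two pivot rows is otherwise zero), and your ratio $R=O_{i_0 i}O_{i_0 j}/O_{ij}$ together with the $c^2$-scaling plays exactly the role of the paper's proportionality constant $\lambda$ and the clearing factor $m=k^2\lambda u$.
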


\begin{proof}
By permutating variables, if necessary, we can assume that $Q$ has the form
\begin{align*}
Q = \begin{pmatrix}
g & u & \av^T \\
u & h &  \bv^T \\
\av & \bv & C \\
\end{pmatrix}
\end{align*}
for some matrix $C \in \Z^{(s-2) \times (s-2)}$ and $u \neq 0$.
Now consider for some $i \neq j$ the off-diagonal submatrix 
\begin{align*}
\begin{pmatrix} u & a_j\\b_i & c_{ij} \end{pmatrix}.
\end{align*}
For this to have rank at most one, we must have $uc_{ij} = a_jb_i$.
If $\av = 0$ or $\bv = 0$ this implies that $c_{ij} = 0$
for all $i \neq j$, which corresponds to case (i).
Otherwise, we can assume that $\av \neq 0$ and $\bv \neq 0$.
Again by the off-rank property we deduce that 
the vectors $\av$ and $\bv$ are linearly dependent
and we have $\bv = \lambda \av$ for some $\lambda \in \Q\backslash\{0\}$.
By using $uc_{ij} = a_jb_i$ we deduce that $C = u^{-1} \bv \otimes \av + E$ for
some diagonal matrix $E$. It remains to `lift' this information to the matrix $Q$.

If we consider the structure of $Q$ modulo diagonal matrices, we have to show that 
one can choose diagonal entries $x$ and $y$ in such a way that the matrix
\begin{align*}
P = \begin{pmatrix}
x & u & \av^T \\
u & y &  \lambda \av^T \\
\av & \lambda \av & u^{-1} \lambda \av \otimes \av \\
\end{pmatrix},
\end{align*}
has the form $m^{-1} \vv \otimes \vv$ for some $m \in \Z \backslash\{0\}$ and $\vv \in \Z^s$.
There isn't any choice but to complete $P$ to
\begin{align*}
\begin{pmatrix}
u\lambda^{-1} & u & \av^T \\
u & \lambda u &  \lambda \av^T \\
\av & \lambda \av & u^{-1}\lambda \av \otimes \av \\
\end{pmatrix}
\end{align*}
and check that this matrix has indeed rank one.
To be able to write this as a tensor product of a vector $\vv$
with itself, we multiply with the common factor $m = k^2 \lambda u$. 
A suitable $k \in \N$ makes all entries to integers
and allows us to choose $\vv^T = k\, (u, \lambda u, \lambda \av^T)$.
\end{proof}

\section{The Parameter Method} \label{sec-param}

In this section we are going to analyse case (i) in Lemma \ref{Lem-Rank1}. 
By changing notation, we can assume that the `non-diagonal'
variable is $x_s$ and by completing the square we can write the quadratic 
equation $Q(\xv) = 0$ in the form
\begin{align*}
a_1(b_1x_1-c_1x_s)^2 + \ldots + a_{s-1}(b_{s-1}x_{s-1}-c_{s-1}x_s)^2  + a_sx_s^2= 0
\end{align*}
for some $a_i,b_i,c_i \in \Z$. We can assume that $b_i \neq 0$ for all $1 \leq i \leq s-1$.

Now we can simplify this equation by incorporating the restriction of translation invariance.
If we replace each $x_i$ by $x_i + 1$ we obtain the same quadratic form with an additional linear term,
which is two times
\begin{align*}
a_1(b_1x_1-c_1x_s)(b_1-c_1) + \ldots + a_{s-1}(b_{s-1}x_{s-1}-c_{s-1}x_s)(b_{s-1}-c_{s-1})  + a_sx_s,
\end{align*}
as well as the constant term 
\begin{align*}
a_1(b_1-c_1)^2 + \ldots + a_{s-1}(b_s-c_s)^2  + a_s.
\end{align*}
The linear term must be zero for all $\xv$, 
which implies that $b_i = c_i$ for all $i$ with $a_i \neq 0$. Where $a_i = 0$
we can simply assume that $b_i = c_i$ without changing the quadratic form.
Furthermore this implies that $a_s = 0$. These conditions are sufficient for the
vanishing of the constant term as well.

If we set $d_i = a_ib_i^2$ we end up with the almost diagonal form
\begin{align} \label{Shift-quad}
d_1(x_1-x_s)^2 + \ldots + d_{s-1}(x_{s-1}-x_s)^2 = 0.
\end{align}
If $d_i = 0$ for some $i$, we can easily find a non-trivial solution in $\mathcal{A}$.
Therefore, we can assume that all $d_i \neq 0$.

We view $x_s$ as a free `parameter' ranging over the set $\mathcal{A}$ 
and focus on the remaining $s-1$ variables.
We define the shifted exponential sum by
\begin{align} \label{eq-ShiftExp}
U_{\mathcal{A}-y}(\alpha) = \sum_{x \in \mathcal{A}} e(\alpha (x-y)^2).
\end{align}
The number of solutions to \eqref{Shift-quad} can be written as the Fourier integral
\begin{align*}
\sum_{x_s \in \mathcal{A}} \int_0^1 \prod_{i=1}^{s-1} U_{\mathcal{A}-x_s}(d_i \alpha) \,d \alpha.
\end{align*}
We compare this to the situation, where $x_1,\ldots,x_{s-1} \in [1,N]$.
The number of solutions in this case is given by
\begin{align*}
\sum_{x_s \in \mathcal{A}} \int_0^1 \prod_{i=1}^{s-1} U_{[1,N]-x_s}(d_i \alpha) \,d \alpha.
\end{align*}
The inner integral is bounded from below by 
\begin{align} \label{Half-N-int}
\int_0^1 \prod_{i=1}^{s-1} U_{[1,N/2]}(d_i \alpha) \,d \alpha
\end{align}
since the set $[1,N]-x_s$ covers either $[1,N/2]$ or $[-N/2,-1]$ completely and 
equation \eqref{Shift-quad} has even degree.

As long as $s-1 \geq 5$ and not all $d_i$ have the same sign, the classical circle method
gives us many integer solutions for a diagonal quadratic equation.
(See the book of Davenport \cite{Dav}, for example.)
The case, where all $d_i$ have the same sign is excluded by the assumption that the quadric has
a non-singular real solution.
This implies a lower bound of size $N^{s-3}$ for \eqref{Half-N-int} and we obtain
\begin{align*}
\sum_{x_s \in \mathcal{A}} \int_0^1 \prod_{i=1}^{s-1} U_{[1,N]-x_s}(d_i \alpha) \,d \alpha \gg \delta N \cdot N^{s-3}.
\end{align*}
Since there are only $\delta N$ trivial solutions of \eqref{Shift-quad} 
in $\mathcal{A}$ by assumption, we get
\begin{align} \label{eq-OneDim1}
\sum_{x_s \in \mathcal{A}} \int_0^1 
\Big|\prod_{i=1}^{s-1} U_{\mathcal{A}-x_s}(d_i \alpha) - \prod_{i=1}^{s-1} \delta U_{[1,N]-x_s}(d_i \alpha)\Big|\,d \alpha 
\gg \delta^{s} N^{s-2} - \delta N.
\end{align}
To deduce a correlation estimate, we replace the indicator function $1_{\mathcal{A}}$
in \eqref{eq-ShiftExp} by $1_{\mathcal{A}} = \delta 1_{[1,N]} + f$ with the balanced function $f$ from \eqref{balanced}.
An expansion of the first product in \eqref{eq-OneDim1} creates $2^{s-1}$ terms, 
where the first one is cancelled
by the second product. The remaining contributions can be bounded from above by 
a finite sum of terms of the form
\begin{align*}
\sum_{x_s \in \mathcal{A}} \int_0^1 \prod_{i=1}^{s-1} |U_{T(x_s)g_i}(d_i \alpha)|\,d \alpha 
\end{align*}
with $(T_yg)(x) = g(x-y)$ and at least one of the $g_i$ equal to $f$.
By H\"older's inequality, we can bound this expression by
\begin{align*}
\sup_{x_s,\alpha} |U_{T_{x_s}f}(\alpha)|^{1/2} \sum_{x_s \in \mathcal{A}} \int_0^1 |U_{g}(\alpha)|^{s-3/2} \,d \alpha 
\end{align*}
for some function $|g| \leq 1$ defined on $[-N,N]$ with $\sum_{|n| \leq N} |g(n)| \leq 2 \delta N$.
This implies $|U_{g}(\alpha)| \leq 2\delta N$ and the upper bound
\begin{align*}
(2\delta N)^{s-6} \sup_{x_s,\alpha} |U_{T_{x_s}f}(\alpha)|^{1/2} 
\sum_{x_s \in \mathcal{A}} \int_0^1 |U_{g}(\alpha)|^{9/2} \,d \alpha. 
\end{align*}
We get the bound $O(N^{5/2})$ for the integral from \cite[Theorem 6]{Keil2}.
This leads to the estimate
\begin{align*}
(\delta N)^{s-5} \sup_{x_s,\alpha} |U_{T_{x_s}f}(\alpha)|^{1/2} N^{5/2} 
\gg \delta^{s} N^{s-2} - \delta N.
\end{align*}
As long as $N \gg_Q \delta^{-2}$ we obtain the correlation estimate
\begin{align*}
\Big|\sum_{x} f(x) e(\alpha (x-x_s)^2)\Big| \gg \delta^{10} N
\end{align*}
for some $x_s \in [1,N]$ and $\alpha \in \T$.
By expanding the square, one can see that the left hand side is just
a quadratic exponential sum.
The density increment procedure in \cite[Section 7]{Keil2} gives us the bound
$\delta \ll (\log\log N)^{-1/11}$.

\section{The Diagonal System} \label{Sec-diag}

We are given a translation invariant quadratic form $\xv^TQ\xv =0$ in $s \geq 6$ variables
with off-rank one, which satisfies case $(ii)$ in Lemma \ref{Lem-Rank1}.

We can write the equation $Q(\xv) = 0$ in the form
\begin{align} \label{normal-quad}
d_1x_1^2 + \ldots + d_sx_s^2 + (v_1x_1 + \ldots + v_s x_s)^2 = 0,
\end{align}
where the coefficients $d_i$ are equal to the diagonal elements
in the matrix $D$ from Lemma \ref{Lem-Rank1}.
Consider $Q(\xv + \einsv) = Q(\xv) + 2 L(\xv) + Q(\einsv)$, where $L(\xv)$ is given by
\begin{align*}
d_1x_1 + \ldots + d_sx_s + (v_1x_1 + \ldots + v_s x_s)(v_1 + \ldots + v_s)
\end{align*}
and the constant term $Q(\einsv)$ is 
\begin{align*}
(d_1 + \ldots + d_s) + (v_1 + \ldots + v_s)^2.
\end{align*}
By translation invariance, the linear and constant terms have to disappear for all $\xv \in \Z$. 
This can only happen if for $n = v_1 + \ldots + v_s$ we have $d_i = -nv_i$ and $d_1 + \ldots + d_s = -n^2$.
In the special case $n = 0$ we end up with the linear equation 
\begin{align*}
v_1x_1 + \ldots + v_s x_s = 0,
\end{align*}
which is covered by the method of Roth \cite{Roth} and gives a better bound
than needed for our theorem here. 
We assume from now on that $n \neq 0$ and write equation \eqref{normal-quad} 
in the form of a system. For a new variable $h = v_1x_1 + \ldots + v_s x_s$ 
with $h \in \Z$ we obtain (after multiplication of the linear equation by $n$)
\begin{equation} \label{eq-sys2}
\begin{split} 
d_1x_1^2 + \ldots + d_s x_{s}^2 + h^2 & = 0, \\
d_1x_1 + \ldots + d_s x_{s} + nh & = 0,
\end{split}
\end{equation}
with the condition $d_1 + \ldots + d_s = -n^2$.
Now we (arbitrarily) restrict $h$ to $n\Z$, write $h = nx_0$ and $d_0 = n^2$. 
Then system \eqref{eq-sys2} reduces further into
\begin{equation*}
\begin{split} 
d_0 x_0^2 + d_1x_1^2 + \ldots + d_s x_{s}^2 = 0, \\
d_0 x_0 + d_1x_1 + \ldots + d_s x_{s} = 0,
\end{split}
\end{equation*}
with $d_0 + d_1 + \ldots + d_s = 0$. 

Now Theorem \ref{dio-thm} implies the result for the weaker system \eqref{eq-sys2}.
One should note at this point that conditions $(ii)$ and $(iii)$ for Theorem \ref{dio-thm} 
are implied by the assumptions in Theorem \ref{R1-thm} or can be assumed to be true
without loss of generality.

\section{Remarks and Open Problems}

Looking at the cases $r=1$ and $r=5$, where we can deal with equations in
$5+r$ variables, one might have a vague hope to be able to extend this result to
off-ranks $2 \leq r \leq 4$. If this were the case, Theorem \ref{Thm1} would require
only $s \geq 10$ variables instead of $s \geq 17$.
(Recent work of Zhao \cite{Zhao} could probably 
reduce the bound even further to $s \geq 9$.)

To prove such a result, we need a structure theorem along the lines of Lemma \ref{Lem-Rank1}
for quadratic forms with off-rank $2 \leq r \leq 4$.
Let us look at the next simplest case $r=2$.

When $r=1$ we have two cases. Slightly simplified, they correspond to the decompositions
$M = D + \vv \otimes \vv$ and $M = D + \ev_s \otimes \vv + \vv \otimes \ev_s$ for a diagonal 
matrix $D$. Combining the two ideas, we end up with three different structures for $r=2$:

\begin{itemize}
\item[(i)] $Q = D + \vv \otimes \vv + \wv \otimes \wv$,
\item[(ii)] $Q = D + \vv \otimes \vv + \ev_s \otimes \wv + \wv \otimes \ev_s$,
\item[(iii)] $Q = D + \ev_{s-1} \otimes \vv + \vv \otimes \ev_{s-1} + \ev_s \otimes \wv + \wv \otimes \ev_s$.
\end{itemize}

It is easy to check, that in all three cases we have indeed a matrix with off-rank at most two. 
Sadly this na\"{i}ve idea doesn't work and they don't cover
all possible cases of matrices with off-rank $r = 2$. Consider the matrix

\begin{align*}
\begin{pmatrix}
* & 1 & 1 & 0 & 1\\
1 & * & 0 & 1 & 1\\
1 & 0 & * & 1 & 0\\
0 & 1 & 1 & * & -1\\
1 & 1 & 0 & -1 & *	
\end{pmatrix},
\end{align*}
where stars mark arbitrary entries.

For it to be of type (i), we would need to find diagonal entries such that the resulting
matrix has rank two. By choosing suitable $3\times3$ matrices with only one entry missing, we can fill
in the diagonal entries easily and check that this cannot be done consistently.

It is also easy to see that option (iii) is not correct since there are non-zero entries
in more than two rows. For option (ii), we have to show that any $4\times4$ submatrix, which results
by deleting a row and corresponding column cannot be completed to have rank one. This follows
directly from the existence of off-diagonal $2\times2$ matrices of full rank in each of those cases.

It is an interesting problem, whether this a is pathological counterexample that can be
understood by adding a case (iv) to the above list or whether symmetric off-rank two
matrices don't have a simple classification.\\

Even though a complete classification seems a non-trivial task,
it is likely, that the ideas of this paper can be used to improve
slightly on the variable bounds for $r \in \{2,3,4\}$ and, therefore,
potentially reduce the overall bound of Theorem \ref{Thm1} from $s \geq 17$
to $s \geq 16$, for example.\\

Concerning the density bounds, it isn't hard to see that the proof given for 
Theorem \ref{Thm2} does generalize to any situation in \cite{Keil2}, where we
use the function $K(\alpha)$ from \eqref{eq-K} to bound the $L^p$-norm of our exponential sum.
This would take care of almost all quadratic forms with off-rank $1 \leq r \leq 4$
as well. The only cases, where this is not possible correspond to Section 8
in \cite{Keil2}, where we reduce the problem to a linear system in four equations.
Further advances in the linear theory could provide 
bounds of the form $(\log N)^{-c}$ for all quadratic forms in sufficiently many variables.

\end{document}